\newtheorem{conjecture}{Conjecture}
\newtheorem{theorem}{Theorem}
\newtheorem{lemma}[theorem]{Lemma}
\begin{document}
\title{Packing and covering directed triangles asymptotically\thanks{This work was supported by the European Research Council (ERC) under the European Union's Horizon 2020 research and innovation programme (grant agreement No 648509). The first, third and fourth authors were also supported by the MUNI Award in Science and Humanities of the Grant Agency of Masaryk University. This publication reflects only its authors' view; the European Research Council Executive Agency is not responsible for any use that may be made of the information it contains.}}

\author{Jacob W. Cooper\thanks{Faculty of Informatics, Masaryk University, Botanick\'a 68A, 602 00 Brno, Czech Republic. E-mail: {\tt jcooper@mail.muni.cz}.}\and
	Andrzej Grzesik\thanks{Faculty of Mathematics and Computer Science, Jagiellonian University, {\L}ojasiewicza 6, 30-348 Krak\'{o}w, Poland. E-mail: {\tt Andrzej.Grzesik@uj.edu.pl}.}\and
	Adam Kabela\thanks{Faculty of Informatics, Masaryk University, Botanick\'a 68A, 602 00 Brno, Czech Republic. E-mail: {\tt kabela@fi.muni.cz}.}\and
        Daniel Kr{\'a}l'\thanks{Faculty of Informatics, Masaryk University, Botanick\'a 68A, 602 00 Brno, Czech Republic, and Mathematics Institute, DIMAP and Department of Computer Science, University of Warwick, Coventry CV4 7AL, UK. E-mail: {\tt dkral@fi.muni.cz}.}}
	
\date{}
\maketitle
\begin{abstract}
A well-known conjecture of Tuza asserts that if a graph has at most $t$ pairwise edge-disjoint triangles, 
then it can be made triangle-free by removing at most $2t$ edges. 
If true, the factor $2$ would be best possible. 
In the directed setting, also asked by Tuza,
the analogous statement has recently been proven,
however, the factor $2$ is not optimal.
In this paper, 
we show that if an $n$-vertex directed graph has at most $t$ pairwise arc-disjoint directed triangles, 
then there exists a set of at most $1.8t+o(n^2)$ arcs that meets all directed triangles. 
We complement our result by presenting two constructions of large directed graphs with $t\in\Omega(n^2)$
whose smallest such set has $1.5t-o(n^2)$ arcs.
\end{abstract}

\section{Introduction}
\label{sec:intro}

The conjecture of Tuza~\cite{Tuz84,Tuz90} on packing and covering triangles in (undirected) graphs
continues to attract substantial interest within the graph theory community and
is included as one of the hundred open problems listed in the monograph by Bondy and Murty~\cite{Bon08}.
The conjecture asserts that if a graph has at most $t$ pairwise edge-disjoint triangles,
then it can be made triangle-free by removing at most $2t$ edges.
The best general result on the conjecture is a 20-year old upper bound of $2.87t$ by Haxell~\cite{Hax99}.
Yuster~\cite{Yus12} established that the conjecture holds asymptotically for dense graphs, 
building on results regarding the fractional version of the conjecture by Haxell and R\"odl~\cite{HaxR01}, and
Krivelevich~\cite{Kri95}.
While it was known that the factor $2$ is the best possible in general,
tight dense examples have recently been provided by Baron and Kahn~\cite{BarK16}.
There are also a number of results on this conjecture in particular
when restricted to special classes of graphs, e.g., \cite{ChaDMMS14,HaxKT12,Pul15,LakBT12}.

When posing the conjecture, Tuza~\cite{Tuz90} also asked about the directed version of the problem,
which is the main subject of this paper.
We will consider directed graphs without parallel arcs but possibly with bigons (two vertices connected by arcs in both ways), as well as 
directed multigraphs that may also contain parallel arcs (in addition to bigons).
For a directed (multi)graph $D$, we write $\nu_t(D)$ for the maximum number of arc-disjoint directed triangles contained in $D$ and
$\tau_t(D)$ for the minimum number of arcs whose removal renders $D$ with no directed triangles.
McDonald, Puleo and Tennenhouse~\cite{McDonPT20} answered the question of Tuza
by establishing that $\tau_t(D)\le 2\nu_t(D)-1$ for any directed multigraph $D$ and
posed the following conjecture.

\begin{conjecture}[McDonald et al.~\cite{McDonPT20}]
\label{conj}
If $D$ is a directed multigraph, then $\tau_t(D)\le 1.5\nu_t(D)$.
\end{conjecture}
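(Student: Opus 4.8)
The plan is to establish Conjecture~\ref{conj} in the equivalent form $\tau_t(D)\le\lfloor 3\nu_t(D)/2\rfloor$ (equivalent since $\tau_t(D)$ is an integer) by induction on $\nu_t(D)$. We may assume every arc of $D$ lies in a directed triangle, as arcs in no directed triangle affect neither parameter. The cases $\nu_t(D)\in\{0,1,2\}$ follow from the bound $\tau_t(D)\le 2\nu_t(D)-1$ of McDonald, Puleo and Tennenhouse~\cite{McDonPT18}, since $2\nu_t(D)-1\le\lfloor 3\nu_t(D)/2\rfloor$ for $\nu_t(D)\le 2$ (and $\tau_t(D)=0$ when $\nu_t(D)=0$). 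So suppose $k:=\nu_t(D)\ge 3$. It suffices to find a set $S$ of at most three arcs with $\nu_t(D-S)\le k-2$: the inductive hypothesis applied to $D-S$ then gives
\[
\tau_t(D)\le|S|+\tau_t(D-S)\le 3+\lfloor 3(k-2)/2\rfloor=\lfloor 3k/2\rfloor.
\]
This is exactly the place where the argument must improve on the trivial factor $3$ (deleting the three arcs of a single directed triangle lowers $\nu_t$ by only one) and on the known factor $2$ of~\cite{McDonPT18}.

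To find $S$, fix a maximum family $\mathcal{T}=\{T_1,\dots,T_k\}$ of arc-disjoint directed triangles, so that every directed triangle of $D$ meets some $T_i$ in an arc. I would proceed by a structural dichotomy, say on $T_1$. Call an arc \emph{private} if it lies in exactly one directed triangle. If an arc of $T_1$ is private, one can hope to delete it together with a single arc of each of $T_2$ and $T_3$: this destroys $T_1,T_2,T_3$ while leaving $T_4,\dots,T_k$ intact, and what then remains is to rule out a packing of size $k-1$ in $D-S$ --- the subtle part, which I expect to require either iterating the analysis on the triangles through the other two arcs of $T_1$, or a cleverer choice of the three deleted arcs so that every directed triangle of $D-S$ stays arc-disjoint from $T_4,\dots,T_k$. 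If no arc incident with $T_1$ (or, pushing this further, with many of the $T_i$) is private, then the digraph spanned by those arcs is ``rich'', and I would search there for a bigon or for two directed triangles sharing exactly one arc, either of which can be destroyed by deleting a single arc whose removal simultaneously lowers $\nu_t$ by two.

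The genuine obstacle is precisely this dichotomy --- converting the heuristic ``amortised over a maximum packing, every second triangle keeps only one of its arcs'' into an honest combinatorial statement. The factor-$2$ bound of~\cite{McDonPT18} is itself proved by a careful but still largely local case analysis, and extracting the remaining $0.5$ appears to demand a global understanding of how directed triangles can interlock; moreover the authors' two constructions with $\tau_t=1.5t-o(n^2)$ leave no room for waste, so any successful argument must be tight. As an intermediate target I would first try to prove the fractional relaxation $\tau_t^{*}(D)\le 1.5\,\nu_t(D)$; since $\tau_t^{*}(D)=\nu_t^{*}(D)$ by linear-programming duality, this reduces to bounding the fractional triangle-packing number by $1.5$ times the integral one, in the spirit of the undirected estimates of Krivelevich~\cite{Kri95} and of Haxell and R\"odl~\cite{HaxR01}. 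Together with the transference fact that for directed graphs with $\Theta(n^2)$ arcs the integral and fractional cover numbers differ by at most $o(n^2)$ --- of the kind underlying Yuster's asymptotic theorem~\cite{Yus12} --- this would not settle Conjecture~\ref{conj}, but it would recover the dense asymptotic form $\tau_t(D)\le 1.5\,\nu_t(D)+o(n^2)$ and pin the remaining difficulty squarely on exact, rather than approximate, rounding.
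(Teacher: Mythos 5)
The statement you have been asked to address is Conjecture~\ref{conj}. It is an open conjecture of McDonald, Puleo and Tennenhouse, stated in the paper but \emph{not} proved there. The paper's actual results are the strictly weaker bounds $\tau_t(D)\le 1.8\nu^*_t(D)$ for arc-weighted directed multigraphs (Theorem~\ref{thm:fract}) and $\tau_t(D)\le 1.8\nu_t(D)+o(n^2)$ for directed graphs (Theorem~\ref{thm:main}), together with constructions showing that the factor $1.5$ in the conjecture, if true, would be asymptotically best possible. So there is no ``paper's own proof'' to compare against; a correct proof of the conjecture would be a new result.

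Your proposal, as you yourself acknowledge, does not supply such a proof. The inductive scaffolding is arithmetically sound: the base cases $\nu_t\le 2$ follow from the $2\nu_t-1$ bound of~\cite{McDonPT18}, and if for every $D$ with $\nu_t(D)=k\ge 3$ one could always exhibit a set $S$ of at most three arcs with $\nu_t(D\setminus S)\le k-2$, the induction would close. But that existence statement carries the entire difficulty of the conjecture, and nothing in your dichotomy establishes it. In the ``private arc'' branch you delete one arc each from $T_1,T_2,T_3$, which removes those three triangles from the chosen packing, but it gives only $\nu_t(D\setminus S)\ge k-3$; a fresh triangle using arcs outside $\mathcal{T}$ could restore a packing of size $k-1$ or even $k$, and you explicitly flag ruling this out as ``the subtle part'' without resolving it. The complementary ``rich'' branch (searching for a bigon or two triangles sharing an arc) is likewise a heuristic, not an argument. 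So the inductive step is missing, and the proof does not get off the ground.

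Your final paragraph proposes the intermediate target $\tau^*_t(D)\le 1.5\nu_t(D)$, to be combined with a transference statement in the style of Yuster and Haxell--R\"odl. Two cautions here. First, that intermediate inequality is itself not known and is in the same direction as the open problem (it is implied by the conjecture since $\tau^*_t\le\tau_t$); the paper proves the different inequality $\tau_t\le 1.8\nu^*_t$ and does not come close to a factor $1.5$ even fractionally. Second, the transference results you cite (and the paper's Theorem~\ref{thm:reg}) control $\nu^*_t-\nu_t$, not $\tau_t-\tau^*_t$ directly; the paper gets from the fractional cover bound to the integer statement by a regularity argument that rounds a cover of the regularity digraph, not by a bare $\tau_t\approx\tau^*_t$ transfer. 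So even the asymptotic $1.5$ bound you describe as a fallback would require new ideas beyond what you outline, and remains open.
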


Our main result is the following asymptotic improvement of the upper bound in the case of directed graphs
with the number of arc-disjoint triangles quadratic in the number of their vertices.

\begin{theorem}
\label{thm:main}
If $D$ is an $n$-vertex directed graph, then $\tau_t(D)\le 1.8\nu_t(D)+o(n^2)$.
\end{theorem}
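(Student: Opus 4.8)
The plan is to adapt the fractional-relaxation strategy that Yuster used for the undirected Tuza conjecture, but to squeeze the constant below~$2$. Write $\nu_t^\ast(D)$ and $\tau_t^\ast(D)$ for the optima of the natural linear programs relaxing arc-disjoint directed-triangle packing and directed-triangle covering; by LP duality $\nu_t^\ast(D)=\tau_t^\ast(D)$, and one always has $\nu_t(D)\le\nu_t^\ast(D)=\tau_t^\ast(D)\le\tau_t(D)$. I would split the proof into two essentially independent parts. The first is a directed analogue of the Haxell--R\"odl theorem: every $n$-vertex directed graph satisfies $\nu_t^\ast(D)\le\nu_t(D)+o(n^2)$. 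This I would obtain by applying a directed regularity lemma, passing to a bounded cluster digraph in which all but $o(n^2)$ of the relevant directed triangles of $D$ are captured, and then realising an almost-optimal fractional packing integrally inside the regular triples by a greedy or R\"odl-nibble argument, wasting only $o(n^2)$ arcs. Granting this, it suffices to prove $\tau_t(D)\le\tfrac95\,\nu_t^\ast(D)$, since then $\tau_t(D)\le\tfrac95(\nu_t(D)+o(n^2))=1.8\,\nu_t(D)+o(n^2)$.

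The second and main part is the inequality $\tau_t(D)\le\tfrac95\,\tau_t^\ast(D)$ --- bounding the integrality gap of the directed-triangle covering LP by $9/5$ --- which I would prove by rounding an optimal fractional cover $x^\star\colon A(D)\to[0,1]$, guided by complementary slackness with the dual optimal fractional packing to control the structure of its support. Roughly: put into the cover every arc whose $x^\star$-value exceeds a carefully chosen threshold; note that any directed triangle not yet covered has all three of its arc-values moderate, hence at least two of them bounded away from~$0$; then cover these leftover triangles by a second, cheaper integral set built from a maximal arc-disjoint subfamily of them together with a local exchange step. Balancing the two contributions and optimising the threshold is what should produce the constant $9/5$. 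It is essential here that directed triangles are rigid --- an arc $uv$ extends to a directed triangle only via a vertex $w$ with $v\to w$ and $w\to u$ --- since the analogous undirected computation only yields the constant~$2$; the savings must come from exploiting this rigidity in the leftover-covering step.

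I expect that leftover-covering step to be the main obstacle: bounding its cost amounts to understanding how a few pairwise arc-disjoint directed triangles of $D$ can overlap and which arcs they force into the cover, and driving the constant down to $9/5$ (a crude estimate gives only~$2$) should require a short but genuinely directed case analysis of these local configurations. A second, more routine hurdle is to make the Haxell--R\"odl reduction rigorous in the directed setting, checking that a usable directed regularity lemma is available and that passing to and from the cluster digraph perturbs $\nu_t$, $\tau_t$ and their fractional counterparts by only $o(n^2)$. Finally, I would note that this scheme has a built-in barrier and cannot by itself reach the conjectured factor $3/2$: the companion lower-bound constructions produce digraphs with $\tau_t$ close to $\tfrac32\,\nu_t^\ast$, so any further improvement must sharpen precisely the leftover-covering estimate rather than replace the overall plan.
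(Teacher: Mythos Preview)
Your reduction to the fractional problem is sound and matches the paper's architecture: prove $\tau_t(D)\le\tfrac95\,\nu_t^\ast(D)$ (this is Theorem~\ref{thm:fract}), and separately close the gap between $\nu_t^\ast$ and $\nu_t$ via directed regularity (the paper does this by applying the fractional bound to the weighted regularity digraph and lifting, rather than by proving $\nu_t^\ast(D)\le\nu_t(D)+o(n^2)$ directly, but the effect is the same). The genuine gap is entirely in your second part, the integrality-gap bound, which you leave as a plan rather than a proof.

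Your proposed mechanism --- threshold the optimal fractional cover, then cover the leftover triangles via a maximal arc-disjoint subfamily plus a ``local exchange step'' --- is a Krivelevich-style argument, and you already concede it gives only~$2$ without some further idea that you do not supply. The paper's route is quite different and does not pass through leftover triangles at all. It rounds the optimal fractional cover $c$ by a \emph{random bipartition}: split $V(D)$ into $A\cup B$ uniformly at random, and delete arcs according to their $c$-value \emph{and} their direction relative to the cut, with different thresholds for arcs inside a part versus arcs crossing from $A$ to $B$. Concretely (Lemma~\ref{lm:triple}), if $\alpha\ge\beta\ge\gamma$ with $\alpha+\beta+\gamma=1$ and $c(e)<\alpha$ for every arc, then removing all internal arcs with $c>\gamma$ and all $A\!\to\!B$ arcs with $c>\beta$ already kills every directed triangle, because a triangle not inside one part has exactly one $A\!\to\!B$ arc; this is where the directed ``rigidity'' is actually used. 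The paper then fixes $\alpha=5/9$ and \emph{randomises over three choices} of $(\beta,\gamma)$ with probabilities $(0.6,0.3,0.1)$, so that the inclusion probability $p(e)$ of each arc becomes a step function of $c(e)$; complementary slackness with the optimal fractional packing reduces the expected cost to verifying $p(e_1)+p(e_2)+p(e_3)\le 1.8$ on every tight triangle, which is a four-line case check. A preliminary lemma (Lemma~\ref{lm:heavy-arc}) disposes of arcs with $c(e)\ge 5/9$.

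In short, your framework is right but the missing idea is the random-bipartition rounding with direction-sensitive thresholds; the rigidity you invoke is cashed in at the rounding stage, not in a post-hoc leftover-covering step.
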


We derive Theorem~\ref{thm:main} from Theorem~\ref{thm:fract} concerning the fractional version
of the problem (see Section~\ref{sec:prelim} for the definition of
the corresponding fractional parameters), which we prove in Section~\ref{sec:fract}.
While we can prove the fractional version for directed arc-weighted multigraphs,
the proof only yields Theorem~\ref{thm:main} in the (unweighted) directed graph setting
(see Section~\ref{sec:nfract}).

\begin{theorem}
\label{thm:fract}
If $D$ is an arc-weighted directed multigraph, then $\tau_t(D)\le 1.8\nu^*_{t}(D)$.
\end{theorem}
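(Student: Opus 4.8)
The plan is to pass to the fractional covering problem and round. By LP duality, $\nu^*_t(D)$ equals the minimum of $\sum_a w(a)\,z(a)$ over all fractional arc-covers, i.e.\ all $z\colon a\mapsto z(a)\ge 0$ with $\sum_{a\in T}z(a)\ge 1$ for every directed triangle $T$; fix such an optimal $z$ together with an optimal fractional triangle packing $y$. Complementary slackness supplies two facts to exploit: every directed triangle contains an arc $a$ with $\sum_{T\ni a}y_T=w(a)$ (otherwise $y_T$ could be increased), so the set $S$ of these \emph{saturated} arcs is already a triangle cover; and $z$ may be taken to be a basic optimum, so every arc in its support lies in a directed triangle $T$ with $\sum_{a\in T}z(a)=1$. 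The difficulty is that $S$ is a wasteful cover: its weight equals $\sum_T|T\cap S|\,y_T$, which may charge a triangle up to three times, giving only the factor~$3$. The whole task is to produce a cover $F$ for which this weighted count charges each directed triangle at most $1.8$ times its $y$-weight on average.

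The cheap ingredient is the following: in any directed multigraph $D'$ a uniformly random linear order of the vertices makes each arc backward with probability exactly $\tfrac12$, so deleting the backward arcs --- which destroys all directed cycles, in particular all directed triangles --- costs at most $\tfrac12 w(D')$ in expectation, and hence $\tau_t(D')\le\tfrac12 w(D')$ for every $D'$. On the part of $z$ supported on arcs with $z(a)$ bounded away from~$0$ this would already be competitive; the obstruction is that, after the heavy arcs are placed directly into $F$, a directed triangle of $D$ may survive using exactly two arcs of a given ``light'' layer, appearing there merely as a directed path of length two whose closing arc lives in $D$ at a strictly lighter level. Such a path is not a cycle of the layer, so the ordering argument misses it, and one cannot afford to destroy \emph{all} length-two directed paths --- that can cost $\tfrac23 w(D')$, as in a blow-up of a single directed triangle. (Complete digraphs, for which $\tau_t\approx\tfrac12 w$ while $\nu^*_t\approx\tfrac13 w$, show that $\tfrac12 w(D')$ is essentially tight, so the ordering bound alone cannot beat the factor~$2$.)

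Accordingly I would stratify the arcs by the magnitude of $z(a)$ using one or two carefully chosen thresholds; put the heaviest arcs into $F$ directly, paying at most a bounded multiple of their contribution $\sum w(a)z(a)$ to $\nu^*_t(D)$; apply the random-ordering bound inside each lighter stratum to kill every directed triangle lying entirely within it; and then treat the residual ``bad'' directed paths of length two, with their light closing arcs, by a local structural analysis --- classifying how such a path can sit in its stratum relative to the closing arc, and in each case either arguing that the light closing arcs form a small enough set to be added to $F$ outright, or using them to increase the fractional packing $y$ and contradict its optimality (equivalently, exploiting that every support arc of $z$ lies in a tight triangle). The constant $1.8=\tfrac95$ should emerge from optimizing the thresholds against the costs incurred in this case analysis, and carrying out that optimization is the main obstacle: the random-ordering argument is inherently stuck at factor~$2$, and the point of the fractional relaxation is precisely that it lets these bad configurations be charged fractionally. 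Finally, since the only arcs one ever adds outside the support of $z$ are such light closing arcs, the whole argument goes through for arc-weighted multigraphs, whereas recovering the integral statement with additive error $o(n^2)$ genuinely uses that $D$ is a directed graph, via a regularity-type passage from the fractional to the integral parameter.
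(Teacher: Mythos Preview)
Your proposal correctly identifies the framework --- LP duality, an optimal fractional cover $z$, complementary slackness, and randomized rounding that charges each packed triangle at most $1.8$ times --- but it stops short of an actual construction, and the scheme you sketch (random linear order within strata of $z$-values, plus ad hoc repair of ``bad'' length-two paths) is not the one that works. You yourself flag the obstacle: a linear order kills cycles but not paths, and once you try to kill the length-two paths whose closing arc lies in a lighter stratum you are back to paying essentially $\tfrac23$ of the stratum weight, which is too much. There is no case analysis at the end of your outline that would pin down $1.8$; the constant is simply asserted to ``emerge.''

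The paper's rounding is different in two essential ways. First, it uses a random \emph{bipartition} $V=A\cup B$ rather than a linear order, and --- crucially --- treats arcs asymmetrically by direction: arcs from $A$ to $B$ are put in $F$ when $c(e)>\beta$, arcs inside $A$ or inside $B$ when $c(e)>\gamma$, and arcs from $B$ to $A$ never. With $\alpha+\beta+\gamma=1$ and $c(e)<\alpha$ everywhere (achieved by first peeling off arcs with $c(e)\ge 5/9$), every surviving triangle is hit; this is exactly the structural lemma that replaces your ``local analysis of bad paths,'' and it costs nothing extra because the asymmetry of a directed triangle guarantees one arc goes $A\to B$. Second, the thresholds $(\beta,\gamma)$ are themselves randomized over three specific choices $(4/9,0),(3/9,1/9),(2/9,2/9)$ with probabilities $0.6,0.3,0.1$; this mixing is what makes the inclusion probability $p(e)$ a carefully shaped step function of $c(e)$, and a short case check on tight triangles then gives $p(e_1)+p(e_2)+p(e_3)\le 1.8$. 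Neither the bipartition-with-directional-thresholds idea nor the randomization over thresholds appears in your plan, and they are the content of the proof.
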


If true, Conjecture~\ref{conj} would be best possible.
In particular, McDonald~et~al.~\cite{McDonPT20} gave an example of a tournament $D$
with $\tau_t(D)=3$ and $\nu_t(D)=2$;
they also mention that their computational effort supports the conjecture by checking additional examples.
In Section~\ref{sec:lower}, we remark that using this tournament
one readily obtains an infinite family of tournaments attaining the factor $1.5$,
however, the number of arc-disjoint triangles in such tournaments is linear in the number of its vertices.
So, we present two constructions of directed graphs with quadratically many (in the number of vertices)
arc-disjoint triangles attaining this factor asymptotically.

\section{Preliminaries}
\label{sec:prelim}

In this section, we fix notation used throughout the paper.
If $D$ is a directed (multi)graph, we write $\lvert D\rvert$ for the size of its vertex set, and $E(D)$ for the set containing all its arcs.
Going forward, `triangle' will always mean `directed triangle', and we write $T(D)$ for the set containing all its (directed) triangles. 
In particular, if $D$ is triangle-free, it is understood to contain no directed triangles.
If $F\subseteq E(D)$,
we write $D\setminus F$ for the directed graph obtained from $D$ by removing the arcs belonging to $F$;
in the case $F=\{e\}$, we write $D\setminus e$ instead of $D\setminus\{e\}$.
An \emph{arc-weighted directed (multi)graph} is a directed (multi)graph for which every 
arc $e$ is assigned a non-negative weight $w(e)$.

A \emph{fractional triangle packing} of an arc-weighted directed multigraph $D$
is a function $m:T(D)\to[0,\infty)$ such that for every arc $uv$
the sum of $m(uvw)$ taken over all vertices $w$ such that $uvw$ is a directed triangle
is at most the weight $w(uv)$ of the arc $uv$.
The \emph{weight} of the fractional triangle packing is equal to the sum $m(uvw)$ taken over all directed triangles $uvw\in T(D)$.
The maximum weight of a fractional triangle packing of $D$ is denoted by $\nu^*_{t}(D)$.

A \emph{fractional triangle cover} of an arc-weighted directed multigraph $D$
is a function $c:E(D)\to[0,1]$ such that $c(uv)+c(vw)+c(wu)\ge 1$ for every triangle $uvw$ of $D$.
The \emph{weight} of the fractional triangle cover is equal to the sum of $w(uv)\cdot c(uv)$ taken over all arcs $uv\in E(D)$.
The minimum weight of the fractional triangle cover of $D$ is denoted by $\tau^*_{t}(D)$. 
A \emph{triangle cover} of an arc-weighted directed multigraph $D$
can be viewed as the fractional triangle cover of $D$ with a function $c:E(D)\to\{0,1\}$.
In particular, $\tau_{t}(D)$ is the minimum sum of weights of arcs,
whose removal makes a graph triangle-free,
which coincides with the definition of $\tau_{t}(D)$ for unweighted multigraphs. 
We will not use $\nu_t(D)$ for weighted multigraphs $D$.

Note that the fractional packing and cover can be viewed in terms of linear programming,
and the duality of linear programming implies that $\nu^*_{t}(D)=\tau^*_{t}(D)$
for every arc-weighted directed multigraph $D$.
In addition, every directed (multi)graph $D$ can be viewed as an arc-weighted directed (multi)graph $D$
where each arc is assigned weight one, which implies that $\nu_t(D)\le \nu^*_{t}(D)=\tau^*_{t}(D)\le\tau_t(D)$.

We conclude this section with introducing notions related to regularity partitions of directed graphs.
It will also be necessary in our exposition to use the corresponding notions for undirected graphs, however,
since both settings are completely analogous, 
we have decided to present regularity partitions only for directed graphs.
An \emph{$\varepsilon$-regular partition} of a directed graph $D$ is a partition of its vertices
into sets $V_1,\ldots,V_k$ such that 
$k\ge\varepsilon^{-1}$, 
$\left\lvert\lvert V_i\rvert-\lvert V_j\rvert\right\rvert\le 1$ for all $i,j\in [k]$, and
all but $\varepsilon k^2$ pairs of sets $V_i$ and $V_j$ are $\varepsilon$-regular;
a pair of sets $V_i$ and $V_j$, $1\le i<j\le k$, is said to be \emph{$\varepsilon$-regular}
if the following holds for every $A\subseteq V_i$ and $B\subseteq V_j$ with
$\lvert A\rvert\ge\varepsilon\lvert V_i\rvert$ and $\lvert B\rvert\ge\varepsilon\lvert V_j\rvert$:
\[\left\lvert\frac{e(A,B)}{\lvert A\rvert\cdot\lvert B\rvert}-\frac{e(V_i,V_j)}{\lvert V_i\rvert\cdot\lvert V_j\rvert}\right\rvert\le\varepsilon\mbox{ and }
  \left\lvert\frac{e(B,A)}{\lvert A\rvert\cdot\lvert B\rvert}-\frac{e(V_j,V_i)}{\lvert V_i\rvert\cdot\lvert V_j\rvert}\right\rvert\le\varepsilon ,
\]
where $e(X,Y)$ is the number of arcs oriented from $X$ to $Y$.
The directed version of Szemer\'edi regularity lemma proven by Alon and Shapira \cite{AloS04a} implies that for every $\varepsilon>0$, there exists $K(\varepsilon)$ such that
every directed graph $D$ with at least $\varepsilon^{-1}$ vertices
has an $\varepsilon$-regular partition with at most $K(\varepsilon)$ parts.
Finally, if $V_1,\ldots,V_k$ is an $\varepsilon$-regular partition of a directed graph $D$,
then the \emph{$\varepsilon$-regularity digraph} $R$ corresponding to this partition is defined as follows.
Let $R$ be the arc-weighted directed graph with $k$ vertices, with each vertex corresponding to a unique part of the partition.
If the parts $V_i$ and $V_j$ form an $\varepsilon$-regular pair,
then $R$ contains an arc from the vertex corresponding to $V_i$ to the vertex corresponding to $V_j$ with weight $\frac{e(V_i,V_j)}{\lvert V_i\rvert\cdot\lvert V_j\rvert}$, and
an arc from the vertex corresponding to $V_j$ to the vertex corresponding to $V_i$ with weight $\frac{e(V_j,V_i)}{\lvert V_i\rvert\cdot\lvert V_j\rvert}$.

\section{Fractional packing}
\label{sec:fract}

This section is devoted to the proof of Theorem~\ref{thm:fract}.
We start with a lemma that allows us to assume that an optimal fractional triangle cover
contains no arc of weight at least $5/9$.

\begin{lemma}
\label{lm:heavy-arc}
Let $D$ be an arc-weighted directed multigraph and $c:E(D)\to[0,1]$ be an optimal fractional triangle cover of $D$.
Suppose that there exists an arc $e$ such that $c(e)\ge 5/9$.
If $\tau_t(D\setminus e)\le 1.8\nu^*_{t}(D\setminus e)$, then $\tau_t(D)\le 1.8\nu^*_{t}(D)$.
\end{lemma}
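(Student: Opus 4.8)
The plan is to remove the heavy arc $e$, apply the hypothesis to $D\setminus e$, and then add $e$ back into the resulting cover. Write $e=uv$. Since $c$ is an optimal fractional triangle cover of $D$ with $c(e)\ge 5/9$, the function $c$ restricted to $E(D\setminus e)$ is a fractional triangle cover of $D\setminus e$ (every triangle of $D\setminus e$ is a triangle of $D$ not using $e$, so the covering inequality is inherited), hence $\tau^*_t(D\setminus e)\le \tau^*_t(D)-w(e)\,c(e)\le \tau^*_t(D)-\tfrac59 w(e)$. By the duality noted in Section~\ref{sec:prelim}, $\nu^*_t(D\setminus e)\le \nu^*_t(D)-\tfrac59 w(e)$.

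Next I would pass to the integral side. Let $F'$ be a set of arcs of $D\setminus e$ meeting all triangles of $D\setminus e$ with $\lvert F'\rvert=\tau_t(D\setminus e)\le 1.8\,\nu^*_t(D\setminus e)$, using the hypothesis. Then $F:=F'\cup\{e\}$ meets every triangle of $D$: a triangle either avoids $e$, in which case it is a triangle of $D\setminus e$ and is hit by $F'$, or it uses $e$ and is hit by $e$ itself. Hence
\[
\tau_t(D)\le \lvert F\rvert = \tau_t(D\setminus e)+1 \le 1.8\,\nu^*_t(D\setminus e)+1 \le 1.8\Bigl(\nu^*_t(D)-\tfrac59 w(e)\Bigr)+1 = 1.8\,\nu^*_t(D)+1-w(e).
\]
The worrying case is $w(e)<1$, where this bound does not immediately close; but here one should instead observe that in a fractional triangle cover we may assume $w(e)\ge c(e)$ is irrelevant and what matters is that the arc $e$, once deleted, is worth $w(e)$ to the packing only if $w(e)\le$ its contribution — more carefully, the right normalization is to note $\nu^*_t(D\setminus e)\le\nu^*_t(D)-\min(w(e),\tfrac59 w(e))$ is not quite enough, so the genuine content is the choice of the constant $5/9$: we need $1.8\cdot\tfrac59=1$, i.e. the weight saved in the fractional packing by deleting $e$, when scaled by $1.8$, must compensate for the single arc $e$ added to the cover. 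Thus the computation $1.8\cdot\tfrac59 = 1$ is exactly what makes the induction close, and the displayed inequality becomes $\tau_t(D)\le 1.8\,\nu^*_t(D)+1-1 = 1.8\,\nu^*_t(D)$ once one accounts for the fact that deleting $e$ reduces $\tau^*_t$ (equivalently $\nu^*_t$) by at least $c(e)\ge 5/9$ in the unit-weight contribution of that arc.

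The main obstacle is handling the interaction between the \emph{weight} $w(e)$ of the arc and its \emph{cover value} $c(e)$: if $w(e)$ is large the saving in $\nu^*_t$ could exceed $\tfrac59$, which only helps, but if $w(e)$ is small one must be careful that adding back a single arc $e$ to the integral cover is still paid for. The clean way around this is to first reduce to the case $w(e)\le 1$ (one may cap the weight of any arc at $1$ without changing either $\nu^*_t$ or $\tau_t$, since no fractional cover value exceeds $1$ and the integral cover is unaffected), and then the bound $\nu^*_t(D\setminus e)\le \nu^*_t(D)-c(e)w(e)\le\nu^*_t(D)-\tfrac59 w(e)$ combined with $w(e)\le 1$ — no, rather with the observation that the fractional packing weight through $e$ is at most $w(e)\le 1$ and at least (by complementary slackness / optimality of $c$) equal to $w(e)$ when $c(e)>0$ — gives $\nu^*_t(D\setminus e)\le\nu^*_t(D)-\tfrac59$ when $w(e)=1$, and in general one runs the same argument after splitting $e$ into unit-weight parallel copies or rescaling. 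Granting $\nu^*_t(D\setminus e)\le\nu^*_t(D)-\tfrac59$, the chain above yields $\tau_t(D)\le 1.8\,\nu^*_t(D\setminus e)+1\le 1.8\,\nu^*_t(D)-1+1 = 1.8\,\nu^*_t(D)$, as desired. I expect the actual proof to phrase this via complementary slackness: optimality of $c$ forces the packing to saturate $e$, so deleting $e$ costs the packing its full share $w(e)\ge\tfrac59$ (after the harmless normalization $w(e)=1$), and $1.8\cdot\tfrac59=1$ absorbs the added arc.
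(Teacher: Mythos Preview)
Your overall strategy---remove the heavy arc, apply the hypothesis, add the arc back---is exactly what the paper does. The gap is a single accounting error that creates an obstacle which does not actually exist.

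In the arc-weighted setting, $\tau_t(D)$ denotes the minimum \emph{weight} of a set of arcs hitting every triangle, not its cardinality. Hence when you form $F=F'\cup\{e\}$, the cost increases by $w(e)$, not by~$1$. The correct chain is
\[
\tau_t(D)\;\le\;\tau_t(D\setminus e)+w(e)\;\le\;1.8\,\nu^*_t(D\setminus e)+w(e)\;\le\;1.8\Bigl(\nu^*_t(D)-\tfrac{5}{9}\,w(e)\Bigr)+w(e)\;=\;1.8\,\nu^*_t(D),
\]
and the argument closes immediately: the identity $1.8\cdot\tfrac{5}{9}\,w(e)=w(e)$ is all that is needed, for every value of $w(e)$. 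This is precisely the paper's computation.

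All of the machinery you propose in the second half---capping weights at~$1$, invoking complementary slackness to force saturation of $e$, splitting $e$ into unit-weight parallel copies, rescaling---is unnecessary once the weight bookkeeping is fixed. In fact several of those fixes do not work as stated: capping weights at~$1$ changes both $\nu^*_t$ and $\tau_t$ in general (the packing can route total mass up to $w(e)$ through $e$, and the cover cost $w(e)c(e)$ depends on $w(e)$), and if $w(e)<1$ there is no way to split $e$ into unit-weight copies. Complementary slackness is not needed here at all; the only fact used about the optimality of $c$ is that its total weight equals $\tau^*_t(D)=\nu^*_t(D)$.
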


\begin{proof}
Since the fractional triangle cover of $D$ restricted to $E(D)\setminus\{e\}$ is a fractional triangle cover of $D\setminus e$,
it follows that
\[\tau^*_{t}(D\setminus e)\le\tau^*_{t}(D)-c(e)w(e)\le\tau^*_{t}(D)-\frac{5}{9}w(e).\]
In particular, it holds that $1.8\tau^*_{t}(D\setminus e)+w(e)\le 1.8\tau^*_{t}(D)$.

By the assumption of the lemma,
there exists a set $F$ of arcs of weight at most $1.8\nu^*_{t}(D\setminus e)=1.8\tau^*_{t}(D\setminus e)$ such that
$F$ is a triangle cover of $D\setminus e$.
Since the set $F\cup\{e\}$ is a triangle cover of $D$ and
its weight is at most $1.8\tau^*_{t}(D\setminus e)+w(e)\le 1.8\tau^*_{t}(D)=1.8\nu^*_{t}(D)$,
the lemma follows.
\end{proof}

The next lemma is a general lemma for obtaining a triangle cover of a directed multigraph
from a triangle fractional cover. In the proof of Theorem~\ref{thm:fract},
we apply this lemma for $\alpha=5/9$ and different choices of $\beta$ and $\gamma$.

\begin{lemma}
\label{lm:triple}
Let $\alpha\ge\beta\ge\gamma\ge 0$ be three reals such that $\alpha+\beta+\gamma=1$.
Let $D$ be an arc-weighted directed multigraph and $c:E(D)\to[0,1]$ be a fractional triangle cover of $D$.
Suppose that $c(e)<\alpha$ for every arc $e$.
Further, let $A$ and $B$ be a partition of its vertex set, and
let $F$ be the set of all arcs $e$ from $A$ to $B$ with $c(e)>\beta$,
all arcs $e$ with both end vertices inside $A$ with $c(e)>\gamma$, and
all arcs $e$ with both end vertices inside $B$ with $c(e)>\gamma$.
The directed multigraph $D\setminus F$ is triangle-free.
\end{lemma}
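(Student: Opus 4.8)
The plan is to show that any directed triangle $xyz$ of $D$ must contain an arc of $F$, so removing $F$ destroys all triangles. Consider such a triangle with vertices $x,y,z$ and arcs $xy,yz,zx$ (in some cyclic orientation). Since $c$ is a fractional triangle cover, we have $c(xy)+c(yz)+c(zx)\ge 1=\alpha+\beta+\gamma$. The assumption $c(e)<\alpha$ for every arc already rules out one arc carrying almost all the weight; the point is to use the finer thresholds $\beta$ and $\gamma$ together with the bipartition $(A,B)$.

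First I would split into cases according to how the three vertices distribute between $A$ and $B$. If all three lie in $A$ (or all three lie in $B$), then all three arcs of the triangle are ``inside'' arcs, so each is in $F$ as soon as its weight exceeds $\gamma$; since the three weights sum to at least $\alpha+\beta+\gamma>3\gamma$ (as $\alpha\ge\beta\ge\gamma$ with not all equal, or at worst $=3\gamma$ forcing equality and hence some weight $\ge\gamma$ — one has to be slightly careful with the non-strict inequality), at least one weight is $>\gamma$ and that arc lies in $F$. The remaining case is that the triangle has vertices in both parts; because it is a directed triangle on three vertices split $2$–$1$ between $A$ and $B$, exactly one of its three arcs goes from $A$ to $B$, one goes from $B$ to $A$, and one is an inside arc. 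Label the $A\to B$ arc $e_1$, the $B\to A$ arc $e_2$, and the inside arc $e_3$.

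Now the heart of the argument: suppose for contradiction that none of $e_1,e_2,e_3$ lies in $F$. Then $c(e_1)\le\beta$ (it is an $A\to B$ arc not in $F$), $c(e_3)\le\gamma$ (inside arc not in $F$), and $c(e_2)<\alpha$ by the global hypothesis on $c$. Adding these gives $c(e_1)+c(e_2)+c(e_3)<\alpha+\beta+\gamma=1$, contradicting the cover inequality for the triangle. Hence at least one of the three arcs is in $F$, and $D\setminus F$ contains no directed triangle. The main subtlety to handle carefully is the all-in-$A$ (or all-in-$B$) case where the $F$-membership test is the weaker threshold $\gamma$ on all three arcs: one must verify that $c(e)<\alpha$ for each arc, combined with $\alpha\ge\beta\ge\gamma$, still forces some arc weight strictly above $\gamma$ — equivalently rule out the degenerate possibility that all three inside-arc weights are exactly $\gamma$, which would need $\alpha=\beta=\gamma=1/3$ but then $c(e)=1/3<\alpha=1/3$ is already violated, so this case cannot occur. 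Aside from this bookkeeping with strict versus non-strict inequalities, the proof is a short case analysis with no real obstacle.
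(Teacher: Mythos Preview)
Your argument is correct and follows essentially the same route as the paper's proof: both do a case split on how the triangle's vertices distribute between $A$ and $B$, and in the mixed case both bound the inside arc by $\gamma$, the $B\to A$ arc by $\alpha$ (strictly, using the hypothesis $c(e)<\alpha$), and conclude the $A\to B$ arc has weight $>\beta$. The only organizational difference is that the paper disposes of the degenerate case $\alpha=\beta=\gamma=1/3$ at the outset (observing that then $c(e)<1/3$ for every arc forces $D$ to be triangle-free), whereas you fold this into the all-in-one-part case at the end; your handling of that point is correct, if a little tangled in the write-up.
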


\begin{proof}
We can assume that $\gamma < 1/3$.
Indeed, if $\gamma\ge 1/3$, then it holds that $\alpha=\beta=\gamma=1/3$.
Since $c(e)<\alpha=1/3$ for each arc $e$, it follows that $D$ is triangle-free.

We consider a triangle, say $T$, of $D$ and show that an arc of $T$ belongs to $F$.
Recall that the sum of the values assigned to arcs of $T$ by $c$ is at least one,
and that if $e$ is an arc within either the set $A$ or the set $B$ and $c(e)\ge 1/3\ge\gamma$, then $e$ belongs to $F$.
Therefore, we can assume that $T$ contains an arc from $A$ to $B$, say $e_{AB}$.
Note that the weight of the arc of $T$ contained inside either $A$ or $B$ is at most $\gamma$ and
the weight of the arc from $B$ to $A$ is less than $\alpha$.
This implies that
\[c(e_{AB})>1-\alpha-\gamma=\beta.\]
Hence, $e_{AB}$ belongs to $F$ which concludes the proof of the lemma.
\end{proof}

We are now ready to prove Theorem~\ref{thm:fract}.

\begin{proof}[Proof of Theorem~\ref{thm:fract}]
Let $c$ be an optimal fractional triangle cover of $D$ and $m$ an optimal fractional triangle packing.
By Lemma~\ref{lm:heavy-arc}, we can assume that $c(e)<5/9$ for every arc $e$ of $D$.
We will show that there exists a triangle cover with weight at most $1.8\nu^*_{t}(D)$
by presenting a random procedure for constructing a triangle cover $F$ and
showing that the expected weight of $F$ is at most $1.8\nu^*_{t}(D)$.
We remark that the procedure can be derandomized using the method of conditional expectations
to obtain a deterministic algorithm producing such a set $F$ with weight at most $1.8\nu^*_{t}(D)$.

We now describe the procedure.
First, we split the vertices into two sets $A$ and $B$,
including each vertex with probability $1/2$ to either of the sets independently of other vertices.
We next fix $\alpha$ to be $5/9$ and the values of $\beta$ and $\gamma$ as follows:
with probability $0.6$, we set $\beta=4/9$ and $\gamma=0$;
with probability $0.3$, we set $\beta=3/9$ and $\gamma=1/9$; and
with probability $0.1$, we set $\beta=\gamma=2/9$.
We refer to these three choices as Choice I, Choice II and Choice III, respectively.
Finally, we apply Lemma~\ref{lm:triple} with these values $\alpha$, $\beta$ and $\gamma$
to obtain a triangle cover $F$.

\begin{table}
\begin{center}
\begin{tabular}{|c|cccccc|}
\hline
$c(e)\in$ & $(4/9,5/9)$ & $(3/9,4/9]$ & $(2/9,3/9]$ & $(1/9,2/9]$ & $(0,1/9]$ & $0$ \\
\hline
Choice I & 0.75 & 0.50 & 0.50 & 0.50 & 0.50 & 0\\
Choice II & 0.75 & 0.75 & 0.50 & 0.50 & 0 & 0 \\
Choice III & 0.75 & 0.75 & 0.75 & 0 & 0 & 0 \\
\hline
$p(e)=$ & 0.75 & 0.60 & 0.525 & 0.45 & 0.30 & 0 \\
\hline
\end{tabular}
\end{center}
\caption{The summary of probabilities of an arc $e$ to be included in a triangle cover $F$ 
for the choices considered in the proof of Theorem~\ref{thm:fract}.}
\label{tab:prob}
\end{table}

We next analyze the expected weight of the triangle cover $F$.
Let $p(e)$ be the probability that an arc $e$ is included in the triangle cover $F$;
the value $p(e)$ depends on the value $c(e)$ only and the dependency can be found in Table~\ref{tab:prob}.
The complementary slackness implies that if $c(uv)>0$ for an arc $uv$ of $D$,
then the sum of $m(uvw)$ over all vertices $w$ such that $uvw$ is a triangle is equal to $w(uv)$.
The expected weight of the triangle cover $F$ is equal to
\begin{align*}
\sum_{uv\in E(D)}p(uv)w(uv) & = \sum_{uv\in E(D)}p(uv)\sum_{w \text{ such that } uvw\in T(D)}m(uvw) \\
                            & = \sum_{uvw\in T(D)}m(uvw)(p(uv)+p(vw)+p(wu))\\
			    & \le \max_{\substack{uvw\in T(D)\\m(uvw)>0}}(p(uv)+p(vw)+p(wu))\times\sum_{uvw\in T(D)}m(uvw).
\end{align*}
Hence, it is enough to show that $p(uv)+p(vw)+p(wu)\le 1.8$ for any triangle $uvw$ with $m(uvw)>0$.

Consider such a triangle and let $e_1$, $e_2$ and $e_3$ denote its arcs so that $c(e_1)\ge c(e_2)\ge c(e_3)$.
As $m(uvw)>0$, the complementary slackness implies that $c(e_1)+c(e_2)+c(e_3)=1$.
Observe using $c(e_1)\ge c(e_2)\ge c(e_3)$ that $p(e_1)\ge p(e_2)\ge p(e_3)$.
We distinguish four cases based on the values of $p(e_1)$ and $p(e_2)$.
\begin{itemize}
\item If $p(e_1) = p(e_2) = 0.75$, then $c(e_3) < 1/9$ and thus $p(e_3)\le 0.3$.
\item If $p(e_1) = 0.75$ and $p(e_2) = 0.6$, then $c(e_3) < 2/9$ and so $p(e_3)\le 0.45$.
\item If $p(e_1) = 0.75$ and $p(e_2) \le 0.525$, then $p(e_3)\le p(e_2)\le 0.525$.
\item If $p(e_1)\le 0.6$, then $p(e_3)\le p(e_2)\le p(e_1)\le 0.6$.
\end{itemize}
In each of the cases, it holds that $p(e_1)+p(e_2)+p(e_3)\le 1.8$;
and this concludes the proof of the theorem.
\end{proof}

\section{Non-fractional packing}
\label{sec:nfract}

In order to prove Theorem~\ref{thm:main},
we will need the next result,
which follows from~\cite[Theorem 1.2]{NutY07}
applied with ${\cal F}$ being the set containing a directed triangle;
the result can also be obtained by a reduction to its undirected analogue,
which can be derived e.g.~from~\cite[Theorem 9(ii)]{HaxR01} or \cite[Theorem 1.4]{KimKOT19}.

\begin{theorem}
\label{thm:reg}
For every $\delta>0$, there exists $\varepsilon\in(0,\delta)$ and an integer $n_0$ such that
for every directed graph $D$ with at least $n_0$ vertices the following holds:
if $R$ is the $\varepsilon$-regularity digraph of $D$ with at most $K(\varepsilon)$ vertices, then
\[\frac{\nu_t(D)}{\lvert D\rvert^2}\ge\frac{\nu^*_{t}(R)}{\lvert R\rvert^2}-\delta.\]
\end{theorem}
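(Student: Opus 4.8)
The plan is to use the directed counting (embedding) lemma to transfer a fractional triangle packing of the regularity digraph $R$ back into an (integral) arc-disjoint triangle packing of $D$. Given $\delta>0$, I would first fix the regularity parameter $\varepsilon$ small enough (depending on $\delta$) so that the bounds below go through, and set $n_0$ large; we may assume $\varepsilon < \delta$ as required. Let $V_1,\dots,V_k$ be the parts of the $\varepsilon$-regular partition, so that each $|V_i|$ is roughly $|D|/k$ with $k \le K(\varepsilon)$, and let $R$ be the associated regularity digraph. Let $m \colon T(R) \to [0,\infty)$ be an optimal fractional triangle packing of $R$, so $\sum_{xyz \in T(R)} m(xyz) = \nu^*_t(R)$, and recall that for each arc $xy$ of $R$ the sum of $m(xyz)$ over triangles through $xy$ is at most the weight $w(xy) = e(V_x,V_y)/(|V_x||V_y|)$, i.e.\ the density $d(V_x,V_y)$.

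The core step is a greedy/random extraction of triangles in $D$, carried out independently for each triangle $xyz$ of $R$ on which $m$ is positive. Fix such a triple with underlying parts $V_x, V_y, V_z$ forming pairwise $\varepsilon$-regular pairs (triples involving an irregular pair contribute total weight at most $\varepsilon k^2 \cdot 1$ to $\nu^*_t(R)$, which after dividing by $|R|^2 = k^2$ contributes at most $\varepsilon < \delta$, and can be discarded at the cost of an additive $\delta$ — actually one should be slightly more careful and fold this into the final error term). Also discard arcs of $R$ whose weight is below some threshold $\eta = \eta(\delta)$; by the counting lemma, when all three densities $d(V_x,V_y), d(V_y,V_z), d(V_z,V_x)$ exceed $\eta$ and the pairs are $\varepsilon$-regular with $\varepsilon$ small relative to $\eta$, the number of directed triangles with one vertex in each of $V_x, V_y, V_z$ is at least $(1-o(1)) \, d(V_x,V_y) d(V_y,V_z) d(V_z,V_x) |V_x||V_y||V_z|$. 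More importantly, by a standard robust version of the counting lemma (a greedy embedding argument using that $\varepsilon$-regularity is inherited by all but a few vertices with large neighbourhoods), one can repeatedly pull out \emph{pairwise arc-disjoint} directed triangles from the tripartite piece on $(V_x,V_y,V_z)$: as long as fewer than $(1-\gamma) \, d(V_x,V_y)|V_x||V_y|$ of the arcs between $V_x$ and $V_y$ (and similarly for the other two pairs) have been used, $\varepsilon$-regularity still guarantees a directed triangle avoiding all used arcs, because the used arcs form a sub-bipartite-graph of density less than $(1-\gamma) d(V_x,V_y)$ and hence cannot block all triangles. This yields at least $(1-\gamma')\min\{d(V_x,V_y), d(V_y,V_z), d(V_z,V_x)\} \cdot \tfrac{|D|^2}{k^2}$ arc-disjoint directed triangles living entirely inside this triple, where $\gamma'\to 0$ as $\varepsilon,\gamma\to 0$ and $k\le K(\varepsilon)$ is bounded.

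To assemble these into one large arc-disjoint packing of $D$, I would process the triangles $xyz$ of $R$ one at a time and, for the triple $(V_x,V_y,V_z)$, extract arc-disjoint directed triangles from it but stop once the number extracted reaches $\lceil m(xyz) \cdot |D|^2/k^2 \rceil$ minus a lower-order slack; the packing constraint $\sum_{z} m(xyz) \le d(V_x,V_y)$ for each arc $xy$ of $R$ guarantees that the total number of arcs of $D$ between $V_x$ and $V_y$ demanded across all triples through $xy$ is at most $d(V_x,V_y)|V_x||V_y|$, so the robust counting lemma above can meet each demand while keeping all extracted triangles arc-disjoint across different triples as well. Summing over all positive triples of $R$ gives
\[
\nu_t(D) \;\ge\; \sum_{xyz \in T(R)} m(xyz)\,\frac{|D|^2}{k^2} \;-\; o(|D|^2) \;=\; \frac{\nu^*_t(R)}{|R|^2}\,|D|^2 \;-\; o(|D|^2),
\]
and after incorporating the discarded irregular-pair and low-density contributions (each bounded by a constant times $\varepsilon$ or $\eta$ after normalising by $|R|^2$), and choosing $\varepsilon,\eta$ small and $n_0$ large accordingly, the error is at most $\delta |D|^2$, which is exactly the claimed inequality.

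The main obstacle I expect is the bookkeeping in the simultaneous arc-disjoint extraction: one must ensure not only that each individual triple $(V_x,V_y,V_z)$ yields enough arc-disjoint triangles, but that the extractions for different triples of $R$ sharing a common bipartite block $(V_x,V_y)$ do not overuse the arcs of that block, and this is precisely where the fractional packing inequality $\sum_z m(xyz) \le d(V_x,V_y)$ is used — it must be matched up against the ``budget'' of usable arcs the robust counting lemma certifies, with a uniform $o(1)$ error absorbing the regularity slack and the ceiling roundings across the boundedly many parts. A clean way to organise this is to set up the greedy argument once, globally, on $D$, processing the demands $m(xyz)|D|^2/k^2$ in some order and invoking the inherited-regularity embedding lemma at each step, rather than triple-by-triple; I would present it in that form to keep the error analysis transparent.
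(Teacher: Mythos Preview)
Your approach is correct in outline but genuinely different from the paper's. The paper does \emph{not} redo the regularity-based extraction directly in the directed setting; instead it reduces to the undirected statement (Theorem~\ref{thm:ureg}, which is~\cite[Theorem 9(ii)]{HaxR01}) via a tripling construction: replace each vertex $v$ of $D$ by three copies $v_1,v_2,v_3$, and for each arc $vw$ of $D$ insert a single undirected edge $v_jw_{j+1}$ for a random $j\in\{1,2,3\}$ (indices mod~$3$). Then every triangle of the resulting undirected graph $G$ corresponds to a directed triangle of $D$, so $\nu_t(G)\le\nu_t(D)$; conversely, each directed triangle of the regularity digraph $R_D$ yields three triangles in the regularity graph $R_G$ with about one third of the arc weights, giving $\nu^*_t(R_G)\ge\nu^*_t(R_D)-\delta_D\lvert R_D\rvert^2/2$. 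One then applies Theorem~\ref{thm:ureg} to $G$ and reads off the conclusion for $D$.

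Your route instead reproves the directed analogue of the Haxell--R\"odl extraction lemma from scratch: take an optimal fractional packing $m$ of $R$, and for each triple $xyz$ greedily pull out $m(xyz)\lvert D\rvert^2/\lvert R\rvert^2$ arc-disjoint directed triangles from $(V_x,V_y,V_z)$, using the packing constraint $\sum_z m(xyz)\le d(V_x,V_y)$ to certify that no bipartite block runs out of arcs. This is the morally right direct argument, and it works, but the step ``$\varepsilon$-regularity still guarantees a directed triangle avoiding all used arcs'' needs more care than you indicate: deleting arcs need not preserve $\varepsilon$-regularity, so the clean way to make your sketch rigorous is either a nibble/Pippenger--Spencer argument on the triangle hypergraph or the layered embedding scheme of~\cite{HaxR01}, not a naive one-triangle-at-a-time greedy. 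What your approach buys is self-containment (no black box from the undirected literature); what the paper's reduction buys is brevity, since all of that extraction machinery is outsourced to the cited undirected theorem.
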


We now prove Theorem~\ref{thm:main}.

\begin{proof}[Proof of Theorem~\ref{thm:main}]
We need to show that for every $\delta_t>0$, there exists $n_0$ such that
every directed graph $D$ with at least $n_0$ vertices satisfies the following:
\begin{equation}
\tau_t(D)\le 1.8\nu_t(D)+\delta_t \lvert D\rvert^2.
\label{eq:m0}
\end{equation}
We apply Theorem~\ref{thm:reg} with $\delta=\delta_t/6$,
and obtain $n_0$ and $\varepsilon\le\delta_t/6$ for which Theorem~\ref{thm:reg} holds.

Let $D$ be a directed graph with at least $n_0$ vertices and
let $R$ be its $\varepsilon$-regularity digraph.
Fix a triangle cover $F_R$ of $R$ and from this define a set $F_D$ of arcs from $D$ as follows:
if $F_R$ contains
an arc from the vertex corresponding to $V_i$ to the vertex corresponding to $V_{j}$,
then $F_D$ contains all arcs from $V_i$ to $V_{j}$.
In addition, $F_D$ contains all arcs between the irregular parts and all arcs inside the parts of the partition.
Observe that $F_D$ is a triangle cover of $D$.
Since the choice of a triangle cover $F_R$ was arbitrary,
we infer that
\begin{equation*}
\frac{\tau_t(D)}{\lvert D\rvert^2}\le\frac{\tau_t(R)}{\lvert R\rvert^2}+2\varepsilon.
\end{equation*}
Thus, we derive using $\varepsilon\le\delta_t/6$ from Theorem~\ref{thm:fract} that
\begin{equation}
\frac{\tau_t(D)}{\lvert D\rvert^2}<\frac{1.8\nu^*_{t}(R)}{\lvert R\rvert^2}+\frac{\delta_t}{3}.
\label{eq:m1}
\end{equation}
On the other hand, Theorem~\ref{thm:reg} yields that
\begin{equation}
\frac{\nu_t(D)}{\lvert D\rvert^2}\ge\frac{\nu^*_{t}(R)}{\lvert R\rvert^2}-\frac{\delta_t}{3}.
\label{eq:m2}
\end{equation}
We conclude that
the inequalities \eqref{eq:m1} and \eqref{eq:m2} yield \eqref{eq:m0}.
\end{proof}

\section{Asymptotically tight constructions}
\label{sec:lower}

As noted in~\cite{McDonPT20}, the carousel tournament $D$ on five vertices
satisfies $\tau_t(D)=3$ and $\nu_t(D)=2$.
We remark that more examples of tournaments that are tight with respect to Conjecture~\ref{conj}
can be obtained by considering a transitive tournament and
replacing one or more disjoint subtournaments induced by five consecutive vertices with $D$.
Observe that the number of arc-disjoint triangles in this construction is at most linear in the number of vertices of the tournament.
In this section,
we present two constructions with the number of arc-disjoint triangles quadratic in the number of vertices that
asymptotically achieve the conjectured factor $1.5$.
The reason for presenting two different constructions is that
the structure of arc-disjoint triangles differ in the two constructions substantially,
which is evidenced by the corresponding optimal fractional covers as we discuss further.

Before presenting the first construction, we need an auxiliary lemma.
This result was proven by Brown and Harary \cite{BroH69};
since their paper is not easily accessible, we include a proof for completeness.

\begin{lemma}
\label{lm:tfree}
Let $D$ be a directed graph with $n$ vertices.
If $D$ has no directed triangle, then $D$ has at most $n^2/2$ arcs.
\end{lemma}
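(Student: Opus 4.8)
The plan is to show that a directed graph $D$ on $n$ vertices with no directed triangle cannot have both $uv$ and $vu$ present for too many pairs, and to balance this against the bound for an oriented graph. First I would bound the number of \emph{bigons} (pairs $\{u,v\}$ with both arcs $uv$ and $vu$ present). Let $B$ be the set of such pairs, and for each vertex $v$ let $d_B(v)$ be the number of vertices $u$ with $\{u,v\}\in B$; a directed triangle on $u,v,w$ would be created whenever two of these bigon-neighbours $u,w$ of $v$ have an arc between them in the right direction, so the bigon-neighbourhood of each vertex must itself be "triangle-poor'' in a strong sense. The cleanest route, however, is to proceed by induction on $n$ using a minimum-degree argument, where by degree I mean the total number of arcs incident with a vertex.

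The key steps, in order, are: (1) If some vertex $v$ has total degree at most $n-1$, delete it; by induction $D\setminus v$ has at most $(n-1)^2/2$ arcs, so $D$ has at most $(n-1)^2/2 + (n-1) < n^2/2$ arcs, and we are done. (2) Otherwise every vertex has total degree at least $n$. I claim this forces a directed triangle. Fix a vertex $v$; it has an out-neighbour set $N^+$ and in-neighbour set $N^-$ with $|N^+| + |N^-| \ge n$ (counting a bigon neighbour in both). Since $|N^+| + |N^-| \ge n > |V(D)\setminus\{v\}| $ only when there is a bigon, more carefully: if $v$ has an out-neighbour $w$ and $w$ has an out-neighbour $u$ with $u\to v$, we have a triangle; so I would track, for a carefully chosen $v$, a neighbour $w$ with many out-neighbours and derive that one of them points back to $v$. (3) Formalize step (2) by a counting/pigeonhole argument: summing total degrees gives at least $n^2$, i.e.\ at least $n^2/2$ arcs, and one shows the extremal configuration (a balanced blow-up of a directed edge, or a "bipartite-like'' orientation) is the only triangle-free way to reach this, so strictly exceeding it is impossible.

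The main obstacle I anticipate is step (2): turning "every vertex has large total degree'' into "there is a directed triangle'' cleanly, since unlike the undirected Mantel situation one must handle bigons, which behave like two arcs but cannot by themselves create a directed triangle. I expect the slick resolution is to observe that the extremal triangle-free directed graphs are exactly the orientations obtained from a complete bipartite graph $K_{\lceil n/2\rceil, \lfloor n/2\rfloor}$ by orienting all edges from one side to the other (giving $\lfloor n^2/4\rfloor$ arcs) together with possibly adding back arcs to form bigons on some of those pairs — and the true tight bound is $\lfloor n^2/4\rfloor$ for oriented graphs, $n^2/2$ once bigons are allowed, coming from orienting \emph{both} directions across the bipartition. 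Once that structural picture is identified, the induction on minimum total degree closes the argument with only routine arithmetic.
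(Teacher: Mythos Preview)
Your step~(2) is false as stated. Take $n$ even and let $D$ be the complete bipartite graph $K_{n/2,n/2}$ with \emph{both} orientations on every edge. This digraph has no directed triangle (any directed walk of length three starting on one side ends on the other side), yet every vertex has total degree exactly $n$. So ``minimum total degree $\ge n$'' does not force a directed triangle, and your induction cannot reach a contradiction in that branch. Your step~(3) does not repair this: once you know $\lvert E(D)\rvert \ge n^2/2$ from the degree sum, showing that ``strictly exceeding $n^2/2$ is impossible'' is precisely the lemma you are trying to prove, so the argument becomes circular. You correctly flag step~(2) as the obstacle, but the proposal does not contain an idea that closes it.

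The paper's proof avoids this by removing \emph{two} vertices at a time rather than one. If $D$ has no bigon then trivially $\lvert E(D)\rvert\le\binom{n}{2}\le n^2/2$. Otherwise pick a bigon on $u,v$. The key observation is that no in-neighbour of $u$ (other than $v$) can be an out-neighbour of $v$, since such a vertex $w$ would give the directed triangle $w\!\to\! u\!\to\! v\!\to\! w$; hence the in-degree of $u$ plus the out-degree of $v$ into $V\setminus\{u,v\}$ is at most $n-2$, and symmetrically for the other pair. Thus at most $2(n-2)+2=2n-2$ arcs touch $\{u,v\}$, and induction on $D\setminus\{u,v\}$ gives $(n-2)^2/2 + (2n-2) = n^2/2$. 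The arithmetic closes exactly because a bigon, not a single vertex, is the right unit to peel off.
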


\begin{proof}
The proof proceeds by induction on the number of vertices of $D$.
The base case of $n\in\{1,2\}$ is clear (note that the bound is tight for $n=2$ since $D$ can be a bigon).
Suppose that $n\ge 3$.
If $D$ has no bigons, then it has at most $\binom{n}{2}\le n^2/2$ arcs and the lemma follows.
Otherwise, let $u$ and $v$ be two vertices of $D$ such that $D$ contains both an arc from $u$ to $v$ and from $v$ to $u$.
Note that no in-neighbor of $u$ is an out-neighbor of $v$ and vice-versa.
Hence,
the sum of the in-degree of $u$ and the out-degree of $v$ with respect to the vertices different from $u$ and $v$ is at most $n-2$.
Symmetrically,
the sum of the in-degree of $v$ and the out-degree of $u$ with respect to the vertices different from $u$ and $v$ is at most $n-2$.
Thus, the total number of arcs incident with $u$ or $v$ is at most $2n-2$ (including the two arcs between $u$ and $v$).
By induction, the directed graph obtained from $D$ by deleting the vertices $u$ and $v$ has at most $(n-2)^2/2$ arcs.
It follows that the number of arcs of $D$ is at most
\[\frac{(n-2)^2}{2}+2n-2=\frac{n^2-4n+4+4n-4}{2}=\frac{n^2}{2},\]
which completes the proof.
\end{proof}

We are now ready to present the first of the two constructions.

\begin{theorem}
\label{thm:random}
Let $D$ be an $n$-vertex directed graph with an arc between any pair of its vertices oriented with probability half in either direction.
It holds that $\nu_t(D)=n^2/6+o(n^2)$ and $\tau_t(D)=n^2/4-o(n^2)$ asymptotically almost surely.
\end{theorem}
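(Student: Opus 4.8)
The plan is to analyze the random tournament $D$ separately for the packing number $\nu_t(D)$ and the covering number $\tau_t(D)$, establishing matching-up-to-$o(n^2)$ bounds for each. For the packing lower bound $\nu_t(D)\ge n^2/6-o(n^2)$, I would invoke the fractional relaxation together with the regularity machinery already developed: the random tournament is quasirandom, so asymptotically almost surely every pair of vertices spans an arc in each direction with density $1/2$, and its $\varepsilon$-regularity digraph $R$ is (close to) the complete digraph on $k$ vertices with all arc-weights $1/2$. On such an $R$ one checks directly that the uniform fractional triangle packing $m$ assigning weight $1/6$ (suitably normalized against the $1/2$ weights) to every triangle is feasible, giving $\nu^*_t(R)/|R|^2 \to 1/6$; then Theorem~\ref{thm:reg} transfers this to $\nu_t(D)/n^2 \ge 1/6 - o(1)$. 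For the upper bound $\nu_t(D)\le n^2/6+o(n^2)$, I would count arc-disjoint triangles crudely: any family of $\nu_t(D)$ arc-disjoint directed triangles uses $3\nu_t(D)$ distinct arcs, and $D$ has only $\binom{n}{2}\sim n^2/2$ arcs, yielding $\nu_t(D)\le n^2/6+o(n^2)$ immediately. Hence $\nu_t(D)=n^2/6+o(n^2)$ a.a.s.

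For the covering number, the key is the lower bound $\tau_t(D)\ge n^2/4-o(n^2)$; the upper bound $\tau_t(D)\le n^2/4$ is trivial because deleting all $\binom{n}{2}$ arcs in one direction of a balanced bipartition — or more simply, it even suffices to use Lemma~\ref{lm:tfree}, which guarantees a triangle-free subdigraph with at least $n^2/2$ arcs, leaving at most $\binom{n}{2}-n^2/2+o(n^2) \le n^2/4 + o(n^2)$ arcs... actually the clean bound is: keep a maximum triangle-free subdigraph, which has $\ge n^2/2 - o(n^2)$ arcs by a quasirandom refinement, so $\tau_t(D) \le \binom{n}{2} - n^2/2 + o(n^2)$, and since $\binom{n}{2} \sim n^2/2$ this needs the sharper statement that a dense quasirandom tournament has a triangle-free subdigraph on roughly $n^2/4 + n^2/4$... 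I will instead exhibit the explicit cover: split $[n]$ into $A\cup B$ of equal size and delete all arcs from $A$ to $B$; the resulting digraph has no directed triangle since any triangle meeting both $A$ and $B$ must traverse from $A$ to $B$ at least once. This deletes $\sim n^2/4$ arcs. So $\tau_t(D)\le n^2/4+o(n^2)$.

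The main obstacle, and the heart of the argument, is the lower bound $\tau_t(D)\ge n^2/4-o(n^2)$. Since $\nu_t(D)=n^2/6+o(n^2)$ only gives $\tau_t(D)\ge n^2/6$, and even the fractional bound $\tau^*_t(D)=\nu^*_t(D)$ will only give $\approx n^2/6$ (the uniform weight $1/3$ on every arc is a fractional cover of weight $\binom{n}{2}/3 \sim n^2/6$, and this is optimal by LP duality against the uniform packing), we cannot extract $n^2/4$ from the fractional parameter at all — this is precisely where the integrality gap lives. Instead I would argue combinatorially: suppose $F$ is a triangle cover with $|F| < (1/4-\eta)n^2$. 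Then $D\setminus F$ is triangle-free, so by Lemma~\ref{lm:tfree} it has at most $n^2/2$ arcs; but $D$ has $\binom{n}{2} = n^2/2 - n/2$ arcs, so this alone gives no contradiction — I need the stronger fact that a triangle-free subdigraph of a quasirandom tournament has at most $n^2/4 + o(n^2)$ arcs, not $n^2/2$. This should follow from a stability/counting argument: in a quasirandom tournament, a triangle-free subdigraph $H$ on vertex set $[n]$ cannot have many bigons (a bigon $uv$ forbids, for every $w$, at least one of the four arcs among $\{u,v,w\}$, and quasirandomness forces many of these to be present), so $H$ is essentially an orientation of a graph, i.e. has at most $\binom{n}{2}$ arcs — still not enough. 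The right statement is: an acyclic-triangle-free subdigraph of a quasirandom tournament is "close to" a transitive-like structure and in fact the maximum is achieved near the $A\to B$ one-directional bipartite digraph plus the two transitive tournaments inside $A$ and $B$; a direct optimization of $|H|$ over triangle-free $H$ in the complete digraph, via a weighted version of Lemma~\ref{lm:tfree} applied to the $\varepsilon$-regularity digraph $R$ (all weights $1/2$), shows $\nu^*_t$-type reasoning is inapplicable but the extremal triangle-free weighted subdigraph of $R$ has total weight $\to k^2/4$. Transferring back: $D\setminus F$ triangle-free forces, after regularization, that $F$ covers all but a $\le (1/4+o(1))$-fraction of the arc-weight in $R$, hence $|F|\ge \binom{n}{2}\cdot\tfrac12 - \tfrac{k^2}{4}\cdot\text{(normalization)} \cdot n^2/k^2 + o(n^2)$, which simplifies to $n^2/4 - o(n^2)$. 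Making this last regularity transfer rigorous — showing that removing few arcs from $D$ corresponds to removing little weight from $R$, in the direction needed — is the delicate point, and I expect it to require a counting lemma guaranteeing that a surviving $\varepsilon$-regular triple with all three arc-weights bounded below by a constant yields a genuine directed triangle in $D\setminus F$.
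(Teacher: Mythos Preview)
Your approach to $\nu_t(D)$ and to the lower bound on $\tau_t(D)$ is essentially the paper's. For $\nu_t$, the paper does exactly what you propose: the regularity digraph $R$ of a random tournament has all arc-weights $1/2+o(1)$ a.a.s., so $\nu^*_t(R)\ge(1/6-o(1))|R|^2$, and Theorem~\ref{thm:reg} transfers this to $\nu_t(D)$; the upper bound is the trivial arc count. For the lower bound $\tau_t(D)\ge n^2/4-o(n^2)$, the paper regularizes $D\setminus F$, deletes arcs of weight at most $2\varepsilon$ from the regularity digraph to obtain $R'$, and uses the counting lemma to conclude $R'$ is triangle-free; Lemma~\ref{lm:tfree} then gives $|E(R')|\le k^2/2$, and since every arc of $R'$ has weight at most $1/2+o(1)$ (because $D$ itself is quasirandom with one-directional density $1/2$ between any two large sets, and $D\setminus F\subseteq D$), the total weight is at most $(1/4+o(1))k^2$. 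This is exactly the mechanism you were reaching for in your last paragraph; the $1/2$ cap on each arc-weight is the ingredient that converts the $k^2/2$ arc bound into a $k^2/4$ weight bound, and your digression about bigons is irrelevant since a tournament has none.

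Your construction for the upper bound $\tau_t(D)\le n^2/4$, however, is wrong. Deleting all arcs from $A$ to $B$ destroys only those directed triangles that meet both parts; directed triangles lying entirely inside $A$ (or inside $B$) survive, and a random tournament on $n/2$ vertices contains $\Theta(n^3)$ of them. The paper's construction is different and cleaner: fix any linear order on the vertices and let $F$ be the set of arcs going from a larger vertex to a smaller one. Then $D\setminus F$ is acyclic, hence triangle-free, and since reversing the order swaps $F$ with its complement in $E(D)$, one may assume $|F|\le\tfrac{1}{2}\binom{n}{2}<n^2/4$.
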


\begin{proof}
We first show for every $\delta>0$ that $\nu_t(D)\ge (1/6-\delta-o(1))n^2$ asymptotically almost surely.
This follows from \cite{KeeS09} but we present a short proof for completeness.
Fix $\delta>0$ and let $\varepsilon$ be as in Theorem~\ref{thm:reg}.
Let $R$ be the $\varepsilon$-regularity digraph of $D$ with at most $K(\varepsilon)$ vertices.
Asymptotically almost surely,
$R$ is a directed graph where each pair of vertices is joined by an arc with weight $1/2-o(1)$ in each of the two directions.
Hence, it holds that $\nu^*_{t}(R)\ge (1/6-o(1))\lvert R\rvert^2$.
It follows using Theorem~\ref{thm:reg} that $\nu_t(D)\ge (1/6-\delta-o(1))n^2$ asymptotically almost surely.

We next show that $\tau_t(D)=n^2/4-o(n^2)$.
Consider any linear order on the vertex set of $D$, and
let $F$ be the set of arcs oriented from a larger vertex to a smaller one (in the considered order).
By reversing the order if necessary, we can assume that $\lvert F\rvert\le\frac{1}{2}\binom{n}{2}$.
Observe that $D\setminus F$ is triangle-free,
which implies $\tau_t(D)\le n^2/4$.
It remains to show that $\tau_t(D)\ge n^2/4-o(n^2)$ asymptotically almost surely.

Fix $\varepsilon>0$.
Let $F$ be a set of arcs such that $D\setminus F$ is triangle-free.
Consider an $\varepsilon$-regular partition of $D\setminus F$ with $k\le K(\varepsilon)$ parts and
the corresponding $\varepsilon$-regularity digraph $R$.
Let $w_R$ be the sum of the weights of the arcs of $R$.
Observe that
\[\lvert E(D\setminus F)\rvert\le \left(\frac{w_R}{k^2}+2\varepsilon+o(1)\right) n^2,\]
where the error terms correspond to the arcs inside the parts and between irregular pairs of vertex parts.
Let $R'$ be the directed graph obtained from $R$ by removing all arcs with weight at most $2\varepsilon$.
Note that the sum of the weights of the arcs of $R'$ is at least $w_R-2\varepsilon k^2$.
If $R'$ contained a triangle, then $D\setminus F$ would also contain a triangle.
Hence, $R'$ is triangle-free and consequently has at most $k^2/2$ arcs by Lemma~\ref{lm:tfree}.
Since each arc of $R$ (and so of $R'$) has weight at most $1/2+o(1)$ asymptotically almost surely,
it follows that $w_R\le (1/4+2\varepsilon+o(1))k^2$.
Consequently, we get that
\[\lvert E(D\setminus F)\rvert\le\left(\frac{1}{4}+4\varepsilon+o(1)\right) n^2,\]
which implies that
\[\lvert F\rvert\ge\left(\frac{1}{4}-4\varepsilon-o(1)\right) n^2.\]
Since the choice of $F$ was arbitrary (among the sets such that $D\setminus F$ is triangle-free),
we obtain that $\tau_t(D)\ge (1/4-4\varepsilon-o(1))n^2$ asymptotically almost surely for any $\varepsilon>0$.
It follows that $\tau_t(D)\ge n^2/4-o(n^2)$ asymptotically almost surely, which completes the proof.
\end{proof}

Our second construction is also probabilistic.
As we have mentioned,
the two constructions substantially differ in the structure of the optimal fractional triangle covers.
While all arcs in the construction presented in Theorem~\ref{thm:random}
would be assigned values close $1/3$ in an optimal fractional triangle cover,
this is not true for the construction given in the next theorem.
Here, most of the forward arcs would be assigned values close to $1/2$ and
most of the backward arcs values close to $0$.

The proof of Theorem~\ref{thm:sparse} is inspired by the construction of graphs with large girth and large chromatic number and the construction given by Baron et al.~in~\cite{BarK16}.

\begin{theorem}
\label{thm:sparse}
Let $D$ be a $n$-vertex directed graph obtained in the following way.
Firstly, for every ordered pair of vertices of $D$ insert an arc from the first vertex to the other
with probability $p=n^{-1+1/12}$; these arcs will be referred to as forward.
Next, if there is a forward arc from a vertex $u$ to a vertex $v$ and
a forward arc from a vertex $v$ to a vertex $w$, insert an arc from $w$ to $u$;
these arcs will be referred to as backward.
It holds that $\nu_t(D)=pn^2/2+o(n^{13/12})$ and $\tau_t(D)=3pn^2/4-o(n^{13/12})$ asymptotically almost surely.
\end{theorem}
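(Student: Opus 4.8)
The plan is to analyze the random directed graph $D$ in two stages, mirroring the structure of the proof of Theorem~\ref{thm:random}. The key point is that the triangles of $D$ are essentially in bijection with the "cherries" (paths of length two) in the forward digraph: a forward cherry $u \to v \to w$ forces a backward arc $w \to u$, producing a directed triangle $uvw$ with two forward arcs and one backward arc. So first I would show that, asymptotically almost surely, the number of forward arcs is $(1+o(1))pn^2$ and the number of forward cherries is $(1+o(1))p^2 n^3$; both follow from standard first- and second-moment computations, since $p = n^{-1+1/12}$ makes the relevant expectations polynomially large. I would also want to record that a.a.s.\ essentially every cherry lies in a unique triangle (distinct cherries give distinct triangles unless coincidences of arcs occur, and such coincidences are lower-order); likewise two triangles share a forward arc only via a common cherry endpoint, and share a backward arc essentially never. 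The reason two separate constructions are given — the remark before the theorem — is that here the optimal cover is asymmetric, and this structural picture is what makes that transparent.

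For the packing lower bound $\nu_t(D) \ge pn^2/2 - o(n^{13/12})$: I would build an arc-disjoint family of triangles greedily. Each forward cherry uses two forward arcs and one (more or less private) backward arc, and each forward arc lies in roughly $2 pn$ cherries a.a.s.\ (one for each choice of extending vertex at either end). A greedy/alteration argument — repeatedly pick a triangle and delete the (few) triangles sharing an arc with it — yields a packing of size at least (number of forward arcs)/2 minus lower-order terms, i.e.\ $(1/2 - o(1))pn^2$; the factor $1/2$ is because each triangle consumes two forward arcs and forward arcs are the bottleneck resource. This gives the matching lower bound on $\nu_t$. For the upper bound $\nu_t(D) \le pn^2/2 + o(n^{13/12})$, I would observe that any arc-disjoint triangle family uses two distinct forward arcs per triangle, so its size is at most half the number of forward arcs, which is $(1/2+o(1))pn^2$ a.a.s.

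For the covering bound, the upper bound $\tau_t(D) \le 3pn^2/4 + o(n^{13/12})$ comes from a clean deletion set: take a uniformly random linear order on the vertices; delete every backward arc that is "forward-pointing" in this order and every forward arc that is "backward-pointing" in this order (reversing the order if needed so the deleted fraction is at most $1/2$ of each type). Each triangle $uvw$ has two forward and one backward arc, and a direct check on the six linear orders of $\{u,v,w\}$ shows at least one of the three arcs is always deleted; the expected number of deleted arcs is $\tfrac14 \cdot pn^2 + \tfrac12 \cdot pn^2 = \tfrac34 pn^2$ up to lower-order terms (there are $\approx pn^2$ forward arcs and, crucially, also $\approx p^2 n^3 \cdot$[collision factor] ... here one must check the backward arcs number $\approx pn^2$ as well — this needs the near-injectivity of the cherry-to-backward-arc map), so some order achieves at most that many deletions.

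The lower bound $\tau_t(D) \ge 3pn^2/4 - o(n^{13/12})$ is the main obstacle and requires the most care. The idea is: if $F$ is a triangle cover, then in $D \setminus F$ there is no cherry $u \to v \to w$ whose backward arc $w \to u$ survives. I would pass to the forward digraph $H = $ (forward arcs of $D$) and argue that $F$ must, for essentially every surviving-backward-arc triangle, kill either one of the two forward arcs or the backward arc. Counting: let $f$ be the number of forward arcs deleted and $b$ the number of backward arcs deleted. The forward arcs not deleted still carry $\ge pn^2 - f$ arcs; among the $\approx p^2 n^3$ cherries, deleting $f$ forward arcs destroys at most $f \cdot O(pn)$ of them, so at least $p^2 n^3 - O(fpn)$ cherries remain intact in forward arcs, and each such cherry's backward arc must then be in $F$, forcing $b \ge p^2 n^3 - O(fpn) - o(p^2 n^3)$. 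But there are only $\approx pn^2$ backward arcs total (again using near-injectivity), so this forces $f \ge (1-o(1))pn^2 \cdot (1 - \frac{b}{pn^2})$; optimizing the trade-off $f + b \ge (3/4 - o(1))pn^2$ via the constraint that destroying a $\beta$-fraction of forward arcs still leaves a $(1-\beta)$-fraction of cherries needing their backward arc deleted, and that backward arcs are themselves limited, yields $|F| = f + b \ge (3/4 - o(1))pn^2$. Making each "$\approx$" here rigorous — in particular the concentration of cherry counts, the bound on how many cherries share a forward arc, and the near-injectivity of the cherry/backward-arc correspondence, all holding simultaneously a.a.s.\ — is where the real work lies; I would isolate these as a preliminary lemma about the random forward digraph $H = G(n, p)$-style digraph before running the covering argument.
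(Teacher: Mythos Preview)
There are genuine gaps in three of the four bounds. For the cover upper bound, your random-linear-order construction fails outright: when $u<v<w$ in the order, the forward arcs $uv$ and $vw$ are forward-pointing and the backward arc $wu$ is backward-pointing, so none of the three is deleted and the triangle $uvw$ survives. (Relatedly, the backward arcs number $\approx p^2 n^3 = n^{1/12}\cdot pn^2$, not $\approx pn^2$; near-injectivity only says that the number of backward arcs is close to the number of cherries, which is $p^2 n^3$.) The paper instead takes a random bipartition $(X,Y)$ and deletes every forward arc except those from $Y$ to $X$; any surviving cherry would need its middle vertex to lie in both $X$ and $Y$, so no triangle survives, and in expectation only $\tfrac34$ of the $\approx pn^2$ forward arcs are removed. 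For the packing lower bound, the greedy argument as you describe it yields only $pn^2/4$: each selected triangle blocks $\approx 4pn$ others (two forward arcs, each lying in $\approx 2pn$ cherries), so from $\approx p^2 n^3$ triangles you extract $\approx p^2 n^3/(4pn)=pn^2/4$. The paper instead adds a super-vertex to equalize the forward in- and out-degrees, takes an Eulerian tour $v_1\cdots v_K$ of the resulting forward digraph, and packs the triangles on consecutive arc-pairs $v_{2i-1}v_{2i}$, $v_{2i}v_{2i+1}$, thereby using essentially every forward arc exactly once and achieving $pn^2/2$.

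The decisive missing idea is in the cover lower bound. Your inequality $b\ge p^2 n^3-O(fpn)$ gives only $|F|\ge pn^2/2$, since $f=pn^2/2$, $b=0$ already satisfies it; the optimization you sketch does not recover the constant $3/4$. The paper's argument is structural rather than a counting trade-off. First, since each backward arc not incident with a short-forward-cycle vertex lies in a unique triangle, one may replace every backward arc of $F$ by a forward arc of that same triangle, so without loss $F$ consists only of forward arcs. Then in $D\setminus F$ no vertex $v$ can have both a surviving forward in-arc and a surviving forward out-arc, else the cherry through $v$ gives an uncovered triangle. Partitioning the vertices into $B$ (no surviving out-arc) and $C$ (no surviving in-arc), the set $F$ must contain every forward arc except possibly those from $C$ to $B$, whence $|F|\ge p\bigl(|B|^2+|B|\,|C|+|C|^2\bigr)-o(pn^2)$, which is minimized at $|B|=|C|=n/2$ with value $3pn^2/4$.
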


\begin{proof}
Let $A$ be the set of vertices of $D$ that are contained in cycles of length at most six that
are formed by forward arcs only, regardless their orientation.
Since the expected number of such cycles is at most
\[\sum_{k=2,\ldots,6}n^k\cdot 2^k\cdot p^{k}\le 124n^{1/2},\]
it holds that $|A|=O(n^{1/2})$ asymptotically almost surely.
So, we will assume that $|A|=O(n^{1/2})$ for the remainder of the proof.
Furthermore, let $d_{\min}$ and $d_{\max}$ be $pn-n^{1/16}$ and $pn+n^{1/16}$, respectively.
The probability that the number of forward arcs
leaving a specific vertex is smaller than $d_{\min}$ or larger than $d_{\max}$
is exponentially small in $n^{1/8}/pn=n^{1/24}$;
the same holds for the number of forward arcs incoming to a specific vertex.
Hence, we can assume that the number of incoming forward arcs and
the number of outgoing forward arcs at each vertex of $D$ is between $d_{\min}$ and $d_{\max}$.
Note that this implies that each vertex of $D$ is incident
with at most $2d_{\max}^2=O(n^{1/6})$ backward arcs
since each such arc can be associated with a path formed by two forward arcs.

We first show that $\nu_t(D)\le pn^2/2+o(n^{13/12})$.
Observe that every triangle of $D$ that does not contain a vertex from $A$
consists of two forward arcs and one backward arc.
Since the number of forward arcs between vertices not contained in $A$
is at most $d_{\max}\cdot(n-\lvert A\rvert)\le d_{\max}n$,
it follows that the number of arc-disjoint triangles not containing a vertex from $A$
is at most $d_{\max}n/2=pn^2/2+O(n^{17/16})$.
Since each vertex of $A$ is incident with at most $d_{\max}$ outgoing forward arcs and
at most $d_{\max}^2$ outgoing backward arcs,
the number of arc-disjoint triangles containing a vertex from $A$ cannot exceed
\[\lvert A\rvert\cdot (d_{\max}+d_{\max}^2)=O(n^{2/3}).\]
We conclude that any set of arc-disjoint triangles in $D$
has at most $pn^2/2+O(n^{17/16})+O(n^{2/3})=pn^2/2+o(n^{13/12})$ elements asymptotically almost surely.

We next show that $\nu_t(D)\ge pn^2/2-o(n^{13/12})$ asymptotically almost surely.
Let $D'$ be a directed multigraph obtained from $D$ by removing all backward arcs,
adding a new vertex, which we will refer to as \emph{super-vertex}, and
adding arcs between the super-vertex and other vertices in such a way that
each vertex of $D$ has both in-degree and out-degree equal to $\lceil d_{\max}\rceil+1$;
note that we may need to add multiple arcs between a single vertex and the super-vertex to achieve this.
Since the directed multigraph $D'$
is strongly connected (there is a bigon between the super-vertex and each other vertex),
$D'$ contains an Eulerian tour $v_1\cdots v_K$.
We next construct a set $S$ of arc-disjoint triangles in $D$ as follows:
for every $i=1,\ldots,\lceil K/2\rceil$,
if neither of the vertices $v_{2i-1}$, $v_{2i}$ and $v_{2i+1}$ (indices modulo $K$ if needed) is the super-vertex or belong to the set $A$,
we include to $S$ the triangle formed by forward arcs $v_{2i-1}v_{2i}$ and $v_{2i}v_{2i+1}$ and
the backward arc $v_{2i+1}v_{2i-1}$.
Observe that the triangles included to $S$ are arc-disjoint (here, we use that none of the three
vertices belong to the set $A$).
It remains to estimate the size of the set $S$.
First note that $K$ is at least $n\cdot d_{\max}=pn^2+O(n^{17/16})$.
On the other hand, the number of arcs incident with the super-vertex is at most
\[2n\cdot (d_{\max}+1-d_{\min})=O(n^{17/16}),\]
and the number of arcs incident with a vertex from $A$ is at most
\[2\lvert A\rvert (d_{\max}+1)=O(n^{7/12}).\]
Hence, at least $K/2-O(n^{17/16})-O(n^{7/12})$ triples $v_{2i-1}$, $v_{2i}$ and $v_{2i+1}$
form a triangle included in the set $S$,
i.e., the set $S$ contains $pn^2/2-O(n^{17/16})=pn^2/2-o(n^{13/12})$ arc-disjoint triangles.

It remains to show that $\tau_t(D)=3pn^2/4-o(n^{13/12})$ asymptotically almost surely.
Split the vertices of $D$ randomly into two sets $X$ and $Y$,
including each vertex to either of the two sets with probability $1/2$.
Next consider the set $F$ of all forward arcs inside $X$,
all forward arcs inside $Y$, all forward arcs from $X$ to $Y$, and
all arcs incident with a vertex from $A$.
It is easy to verify that $D\setminus F$ is triangle-free.
On the other hand, the expected size of $F$ is at most
\[\frac{3}{4}d_{\max}n+2\lvert A\rvert\cdot(d_{\max}+d_{\max}^2)=
  \frac{3}{4}pn^2+O(n^{17/16})+O(n^{2/3}),
\]
since $F$ contains each forward arc with probability $3/4$ and
it also contains all arcs incident with a vertex from $A$.
Hence, asymptotically almost surely,
there is a set $F$ containing at most $3pn^2/4+O(^{17/16})=3pn^2/4+o(n^{13/12})$ arcs
such that $D\setminus F$ is triangle-free.

To finish the proof, we need to show that every set $F$ of arcs such that
$D\setminus F$ is triangle-free contains at least $3pn^2/4+o(n^{13/12})$ arcs.
Before we establish this, we identify two additional properties that
$D$ has asymptotically almost surely.
Let $X$ be a set of $\delta n\ge n/10$ vertices.
The probability that there are less than $\delta^2 pn^2-n^{51/48}$ forward arcs between the vertices of $X$
is exponential in $\frac{n^{51/24}}{\delta^2 pn^2}\ge n^{3/24}/p=\Theta(n^{25/24})$.
Since the number of such sets $X$ is at most $2^n$,
it follows that it holds asymptotically almost surely that
every set of $\delta n\ge n/10$ vertices contains at least $\delta^2 pn^2-n^{51/48}$
forward arcs between its vertices.
Let $X$ and $Y$ be two disjoint sets of vertices, each having at least $n/10$ vertices.
Similarly as in the previous case,
the probability that there are less than $p\lvert X\rvert\cdot\lvert Y\rvert-n^{51/48}$ forward arcs from $X$ to $Y$
is exponential in $\Theta(n^{25/24})$.
Since the number of such pairs of sets $X$ and $Y$ is at most $3^n$,
it holds asymptotically almost surely that
the number of forward arcs from a set $X$ of at least $n/10$ vertices
to a disjoint set $Y$ of at least $n/10$ vertices
is at least $p\lvert X\rvert\cdot\lvert Y\rvert-n^{51/48}$.
Hence, in what follows,
we will assume that the two properties given in this paragraph are true for $D$.

We now argue that every set $F$ such that
$D\setminus F$ is triangle-free contains at least $3pn^2/4+o(n^{13/12})$ arcs.
Let $F$ be such a set of arcs.
We first show that it is possible to assume that
$F$ does not contain a backward arc between two vertices
such that neither of them is contained in $A$.
If $F$ contains such an arc, say from $w$ to $u$, then there exists a vertex $v$ such that
$uv$ and $vw$ are forward arcs.
Since neither $u$ nor $w$ is contained in $A$,
the triangle $uvw$ is the only triangle containing the arc $wu$.
In particular, we can replace the arc $wu$ in $F$ with the arc $uv$
while preserving the size of $F$ and keeping the property that $D\setminus F$ is triangle-free.
Hence, we will assume that all backward arcs contained in $F$ are incident with a vertex from $A$.

Let $B$ be the set of vertices $v$ not contained in $A$ such that
$F$ contains all forward arcs from $v$ leading to vertices not contained in $A$.
Finally, let $C$ be the set of vertices not contained in $A$ or $B$.
We claim that for every vertex $v\in C$, the set $F$ contains all forward arcs to $v$
from the vertices not contained in $A$.
Otherwise, there exists a vertex $v\in C$ and a vertex $u\not\in A$ such that
$D$ contains a forward arc $uv$ and $uv\not\in F$.
Since the vertex $v$ is not included in $B$, there also exists a vertex $w\not\in A$ such that
$D$ contains a forward arc $vw$ and $vw\not\in F$.
However, the triangle $uvw$ would then contain no arc from $F$.

If the set $B$ contained at least $7n/8$ vertices,
$F$ would contain at least $p\lvert B\rvert^2-n^{51/48}\ge 3pn^2/4-n^{51/48}$ arcs
since it contains all forward arcs between vertices of $B$ by the definition of $B$.
It would follow that the size of $F$ is at least $3pn^2/4-n^{51/48}$.
Similarly, if the set $C$ contained at least $7n/8$ vertices
$F$ would contain at least $p\lvert C\rvert^2-n^{51/48}\ge 3pn^2/4-n^{51/48}$ arcs
since it must contain all forward arcs between vertices of $C$ as we have established above.
Again, the size of $F$ would be at least $3pn^2/4-n^{51/48}$.
Hence, we can assume that each of the sets $B$ and $C$
contains at least $n/8-\lvert A\rvert=n/8-O(n^{1/2})$ vertices,
in particular, at least $n/10$ elements if $n$ is large enough.
It follows that the set $F$ contains at least
\[p\left(\lvert B\rvert^2+\lvert B\rvert\cdot\lvert C\rvert+\lvert C\rvert^2\right)-O(n^{51/48})\]
arcs.
Since $\lvert B\rvert+\lvert C\rvert=n-\lvert A\rvert=n-O(n^{1/2})$,
the above lower bound on the size of $F$ is minimized if both $\lvert B\rvert$ and $\lvert C\rvert$ are $n/2-o(n)$.
We conclude that any set $F$ such that $D\setminus F$ is triangle-free
contains at least $3pn^2/4-o(n^{13/12})$ arcs.
\end{proof}

\section*{Acknowledgement}

The authors are indebted to an anonymous reviewer for their very detailed comments and
particularly for bringing the paper~\cite{NutY07} to their attention.

\bibliographystyle{bibstyle}
\bibliography{dirtuza}

\begin{thebibliography}{10}
\providecommand{\url}[1]{\texttt{#1}}
\providecommand{\urlprefix}{URL }
\providecommand{\eprint}[2][]{\url{#2}}

\bibitem{AloS04a}
N.~Alon and A.~Shapira: \emph{Testing subgraphs in directed graphs}, Journal of
  Computer and System Sciences \textbf{69} (2004), 354--382.

\bibitem{LakBT12}
S.~Aparna~Lakshmanan, C.~Bujt{\'a}s and Z.~Tuza: \emph{Small edge sets meeting
  all triangles of a graph}, Graphs and Combinatorics \textbf{28} (2012),
  381--392.

\bibitem{BarK16}
J.~D. Baron and J.~Kahn: \emph{Tuza's conjecture is asymptotically tight for
  dense graphs}, Combinatorics, Probability and Computing \textbf{25} (2016),
  645--667.

\bibitem{Bon08}
J.~A. Bondy and U.~S.~R. Murty: Graph Theory, Graduate Texts in Mathematics,
  2008.

\bibitem{BroH69}
W.~Brown and F.~Harary: \emph{Extremal digraphs}, Colloquia Mathematica
  Societatis J\'anos Bolyai \textbf{4} (1969), 135--198.

\bibitem{ChaDMMS14}
G.~Chapuy, M.~DeVos, J.~McDonald, B.~Mohar and D.~Scheide: \emph{Packing
  triangles in weighted graphs}, SIAM Journal on Discrete Mathematics
  \textbf{28} (2014), 226--239.

\bibitem{Hax99}
P.~E. Haxell: \emph{Packing and covering triangles in graphs}, Discrete
  Mathematics \textbf{195} (1999), 251--254.

\bibitem{HaxKT12}
P.~E. Haxell, A.~Kostochka and S.~Thomass\'e: \emph{Packing and covering
  triangles in {$K_4$-free} planar graphs}, Graphs and Combinatorics
  \textbf{28} (2012), 653--662.

\bibitem{HaxR01}
P.~E. Haxell and V.~R\"odl: \emph{Integer and fractional packings in dense
  graphs}, Combinatorica \textbf{21} (2001), 13--38.

\bibitem{KeeS09}
P.~Keevash and B.~Sudakov: \emph{Triangle packings and 1-factors in oriented
  graphs}, Journal of Combinatorial Theory, Series B \textbf{99} (2009),
  709--727.

\bibitem{KimKOT19}
J.~Kim, D.~K\"uhn, D.~Osthus and M.~Tyomkyn: \emph{A blow-up lemma for
  approximate decompositions}, Transactions of the American Mathematical
  Society \textbf{371} (2019), 4655--4742.

\bibitem{Kri95}
M.~Krivelevich: \emph{On a conjecture of {Tuza} about packing and covering of
  triangles}, Discrete Mathematics \textbf{142} (1995), 281--286.

\bibitem{McDonPT20}
J.~{McDonald}, G.~J. {Puleo} and C.~{Tennenhouse}: \emph{{Packing and covering
  directed triangles}}, Graphs and Combinatorics \textbf{36} (2020),
  1059--1063.

\bibitem{NutY07}
Z.~Nutov and R.~Yuster: \emph{Packing directed cycles efficiently}, Discrete
  Applied Mathematics \textbf{155} (2007), 82--91.

\bibitem{Pul15}
G.~J. Puleo: \emph{Tuza's conjecture for graphs with maximum average degree
  less than 7}, European Journal of Combinatorics \textbf{49} (2015), 134--152.

\bibitem{Tuz84}
Z.~Tuza: \emph{Finite and infinite sets}, Colloquia Mathematica Societatis
  J\'anos Bolyai \textbf{83} (1984), 888.

\bibitem{Tuz90}
Z.~Tuza: \emph{A conjecture on triangles of graphs}, Graphs and Combinatorics
  \textbf{6} (1990), 373--380.

\bibitem{Yus12}
R.~Yuster: \emph{Dense graphs with a large triangle cover have a large triangle
  packing}, Combinatorics Probability and Computing \textbf{21} (2012),
  952--962.

\end{thebibliography}

\end{document}